\newtheorem{definition}{Definition}
\newtheorem{theorem}{Theorem}
\newtheorem{lemma}{Lemma}
\DeclareMathOperator{\sthat}{\textrm{s.t.}}
\DeclareMathOperator{\argmin}{argmin}
\newcommand{\setR}{\mathbb{R}}
\newcommand{\ignore}[1]{}
\newcommand{\indFunc}{\mathbbm{1}}
\newcommand{\norm}[1]{\left\Vert#1\right\Vert}
\begin{document}

\title{Distributed Convex Optimization with Many Non-Linear
  Constraints}
%
%
\author{Joachim Giesen 
\and
S\"oren Laue~\footnote{Friedrich-Schiller-University Jena, Germany, 
\{joachim.giesen, soeren.laue\}@uni-jena.de}}

\date{}
%
%
%
\maketitle              

\begin{abstract}
  We address the problem of solving convex optimization problems with
  many non-linear constraints in a distributed setting. Our approach
  is based on an extension of the alternating direction method of
  multipliers (ADMM). Although it has been invented decades ago, ADMM
  so far can be applied only to unconstrained problems and problems
  with linear equality or inequality constraints. Our extension can
  directly handle arbitrary inequality constraints. It combines the
  ability of ADMM to solve convex optimization problems in a
  distributed setting with the ability of the Augmented Lagrangian
  method to solve constrained optimization problems, and as we show,
  it inherits the convergence guarantees of both ADMM and the
  Augmented Lagrangian method.
\end{abstract}

\section{Introduction}

The increasing availability of distributed hardware suggests
addressing large scale optimization problems by distributed
algorithms. Large scale optimization problems involve a large number
of \emph{optimization variables}, or a large number of \emph{input
  parameters}, or a large number of \emph{constraints}. Here we
address the latter case of a large number of constraints.

In recent years, the alternating direction method of multipliers
(ADMM) that was proposed by Glowinski and Marroco~\cite{Glowinski1975}
and by Gabay and Mercier~\cite{Gabay1976} already decades ago obtained
considerable attention, also beyond the machine learning community,
because it allows to solve convex optimization problems that involve a
large number of parameters in a distributed
setting~\cite{boydADMM}. For instance, the parameters in ordinary
least squares regression are just the data points. The optimization
problem behind a typical machine learning method usually aims for
minimizing a loss-function that is the sum of the losses for each data
point. Hence, the objective function $f$ of such problems is
separable, i.e., it holds that $f(x) = \sum_i f_i(x_i)$, where $f_i$
is determined by the $i$-th data point. In this case ADMM lends itself
to a distributed implementation where the data points are distributed
on different compute nodes.

Standard ADMM works for unconstrained optimization problems and for
optimization problems with linear equality and/or inequality
constraints. Surprisingly, so far no general convex inequality
constraints have been considered directly in the context of ADMM.
Optimization problems with a large number of constraints typically
also arise as big data problems, but instead of contributing a term to
the objective function, each data point now contributes a constraint
to the problem. An illustrative example is the core vector
machine~\cite{TsangKK07,Yildirim08}, where the smallest enclosing ball
for a given set of data points has to be computed.  The objective
function here is the radius of the ball that needs to be minimized,
and every data point contributes a non-linear constraint, namely the
distance of the point from the center must be at most the
radius. Another example problem are robust SVMs that we discuss in
more detail later in the paper.

In principle, standard ADMM can also be used for solving constrained
optimization problems. A distributed implementation of the
straightforward extension of ADMM leads to non-trivial constrained
optimization subproblems that have to be solved in every
iteration. Solving constrained problems is typically transformed into
a sequence of unconstrained problems. Hence, this approach features
three nested loops, the outer loop for reaching consensus, one loop
for the constraints, and an inner loop for solving unconstrained
problems. Alternatively, one could use the standard Augmented
Lagrangian method, originally known as the method of
multipliers\cite{Hestenes69}, that has been specifically designed for
solving constrained optimization problems. Combining the Augmented
Lagrangian method with ADMM allows to solve general constrained
problems in a distributed fashion by running the Augmented Lagrangian
method in an outer loop and ADMM in an inner loop. Again, we end up
with three nested loops, the outer loop for the augmented Lagrangian
method and the standard two nested inner loops for ADMM.  Thus, one
could assume that any distributed solver for constrained optimization
problems needs at least three nested loops: one for reaching
consensus, one for the constraints, and one for the unconstrained
problems. The key contribution of our paper is showing that this is
not the case. One of the nested loops can be avoided by merging the
loops for reaching consensus and dealing with the constraints. Our
approach, that only needs two nested loops, combines ADMM with the
Augmented Lagrangian method differently than the direct approach of
running the Augmented Lagrangian method in the outer and ADMM in the
inner loop. But the latter combination still provides us with a good
baseline to compare against.

\paragraph{Related work.}

To the best of our knowledge, our extension of ADMM is the first
distributed algorithm for solving general convex optimization problems
with no restrictions on the type of constraints or assumptions on the
structure of the problem. Surprisingly, even our baseline method of
running the augmented Lagrangian method in an outer loop and ADMM in
an inner loop has not been studied before. The only special case, that
we are aware of, are quadratically constrained quadratic problems that
have been addressed by Huang and Sidiropoulos~\cite{Huang2016} using
consensus ADMM. However, their approach does not scale to many
constraints, because every constraint gives rise to a new subproblem.

Mosk{-}Aoyama et al.~\cite{distOpt2010} have designed and analyzed a
distributed algorithm for solving convex optimization problems with
separable objective function and linear equality constraints. Their
algorithm blends a gossip-based information spreading, iterative
gradient ascent method with the barrier method from interior-point
algorithms. It is similar to ADMM and can also handle only linear
constraints.

Zhu and Mart\'inez~\cite{ZhuM2012} have introduced a distributed
multi-agent algorithm for minimizing a convex function that is the sum
of local functions subject to a global equality or inequality
constraint. Their algorithm involves projections onto local constraint
sets that are usually as hard to compute as solving the original
problem with general constraints. For instance, it is well known via
standard duality theory that the feasibility problem for linear
programs is as hard as solving linear programs. This holds true for
general convex optimization problems with vanishing duality gap.

In principle, the standard ADMM can also handle convex constraints by
transforming them into indicator functions that are added to the
objective function. However, this leads to subproblems that need to be
solved in each iteration that entail computing a projection onto the
feasible region. This entails the same issues as the method by Zhu and
Mart\'ines~\cite{ZhuM2012} since computing these projections can be as
hard as solving the original problem. We will elaborate on this in
more detail in Section~\ref{sec:problem}.

The recent literature on ADMM is vast. Most papers on ADMM stay in the
standard framework of optimizing a function or a sum of functions
subject to linear constraints and make contributions to one or more of
the following aspects: (1) Theoretical (and practical) convergence
guarantees~\cite{deng2016,he2012,he2015,hong2012,nishihara2015}, (2)
convergence guarantees for asynchronous ADMM~\cite{zhang2014}, (3)
splitting the problem into more than two
subproblems~\cite{chen2016,lin2015}, (4) optimal penalty parameter
selection~\cite{GhadimiTSJ15}, (5) solving the individual subproblems
efficiently or inexactly while still guaranteeing
convergence~\cite{chang2015,chen2017,Lin2011,ScheinbergMG10}, and (6)
applications of ADMM.

%

\section{Alternating direction method of multipliers}
\label{sec:admm}

Here, we briefly review the alternating direction method of
multipliers (ADMM) and discuss how it can be adapted to deal with
distributed data.  ADMM is an iterative algorithm that in its most
general form can solve convex optimization problems of the form
\begin{equation} \label{eq:ADMM}
  \begin{array}{cl}
    \min_{x, z} & f_1(x) + f_2(z) \\
    \sthat & Ax + Bz - c = 0,
  \end{array}
\end{equation}
where $f_1:\setR^{n_1}\to\setR\cup \{\infty\}$ and
$f_2:\setR^{n_2}\to\setR\cup\{\infty\}$ are convex functions,
$A\in\setR^{m\times n_1}$ and $B\in\setR^{m\times n_2}$ are matrices,
and $c\in\setR^m$.

ADMM can obviously deal with linear equality constraints, but it can
also handle linear inequality constraints. The latter are reduced to
linear equality constraints by replacing constraints of the form $Ax
\leq b$ by $Ax + s = b$, adding the slack variable $s$ to the set of
optimization variables, and setting $f_2(s) = \indFunc_{\setR_+^m}(s)$,
where
\[
\indFunc_{\setR_+^m} (s) =
\begin{cases}
  0,& \text{if } s\geq 0\\ \infty, & \text{otherwise},
\end{cases}
\]
is the indicator function of the set $\setR_+^m = \{x\in\setR^m|x\geq
0\}$. Note that $f_1$ and $f_2$ are allowed to take the value
$\infty$.

Recently, ADMM regained a lot of attention, because it allows to solve
problems with separable objective function in a distributed
setting. Such problems are typically given as
\[
\begin{array}{cl}
  \min_{x} & \sum_i f_i(x),
\end{array}
\]
where $f_i$ corresponds to the $i$-th data point (or more generally
$i$-th data block) and $x$ is a weight vector that describes the data
model. This problem can be transformed into an equivalent optimization
problem, with individual weight vectors $x_i$ for each data point
(data block) that are coupled through an equality constraint,
\[
  \begin{array}{cl}
    \min_{x_i, z} & \sum_i f_i(x_i) \\
    \sthat & x_i - z = 0 \quad \forall \, i,
  \end{array}
\]
which is a special case of Problem~\ref{eq:ADMM} that can be solved by
ADMM in a distributed setting by distributing the data.

Adding convex inequality constraints to Problem~\ref{eq:ADMM} does not
destroy convexity of the problem, but so far ADMM cannot deal with
such constraints. Note that the problem only remains convex, if all
equality constraints are induced by affine functions. That is, we
cannot add convex equality constraints in general without destroying
convexity.

Our goal for the following sections is extending ADMM such that it can
also deal with nonlinear, convex inequality constraints. For problems
with many constraints we will show that these constraints can be
distributed similarly as the data points in problems with separable
objective function are distributed for the standard ADMM.

\section{Problems with non-linear constraints}
\label{sec:problem}

We consider convex optimization problems of the form
\begin{equation} \label{eq:general} 
  \begin{array}{cl}
    \min_{x, z} & f_1(x) + f_2(z) \\
    \sthat & g_0(x) \leq 0 \\
    & h_1(x) + h_2(z) = 0,
  \end{array}
\end{equation}
where $f_1$ and $ f_2$ are as in Problem~\ref{eq:ADMM},
$g_0:\setR^{n_1}\to\setR^{p}$ is convex in every component, and
$h_1:\setR^{n_1}\to\setR^{m}$ and $h_2:\setR^{n_2}\to\setR^{m}$ are
affine functions. In the following we assume that the problem is
feasible, i.e., that a feasible solution exists, and that strong
duality holds. A sufficient condition for strong duality is that the
interior of the feasible region is non-empty. This condition is known
as Slater's condition for convex optimization
problems~\cite{Slater50}.

As stated before, our goal is extending ADMM such that it can also
solve Problem~\ref{eq:general}. The simple trick of adding
non-negative slack variables only works, if the constraints
$g_0(x)\leq 0$ are affine. Still, this trick gives some insight into
the general problem.  We have dealt with the non-negativity
constraints on the slack variables by adding an indicator function to
the objective function. The indicator function forces ADMM to project
the solution in every iteration onto the set $\{ s\in\setR^m| s\geq 0
\}$ which is just the non-negative orthant. Projecting onto the
non-negative orthant is an easy problem and thus ADMM can efficiently
deal with linear inequality constraints. As we have already mentioned
in the introduction the idea of transforming the
constraints into indicator functions and adding them to the objective
function can be generalized to non-linear constraints. However, ADMM
then needs to compute 
in every iteration a projection onto the more complicated feasible set
$\{x\,|\,g_0(x)\leq 0\}$. Such a projection is the solution to the
following constrained optimization problem
\[
  \begin{array}{cl}
	\min_x & \|x\|^2 \\
	\sthat &  g_0(x)\leq 0,
  \end{array}
\]
whose solving, depending on the constraints, requires a QP, SOCP or
even SDP solver. Thus we have only deferred the difficulties that have
been induced by the non-linear constraints to the subproblem of
computing the projections. Here, we will devise a method for dealing
with arbitrary constraints directly without any hard-to-compute
projections.

\section{ADMM extension}
\label{sec:extension}

For our extension of ADMM and its convergence analysis we need to work
with an equivalent reformulation of Problem~\ref{eq:general}, where we
replace $g_0(x)$ by
\[
g(x) = \max\{0,\, g_0(x)\}^2,
\]
with componentwise maximum, and turn the convex inequality constraints
into convex equality constraints. Thus, in the following we 
consider optimization problems of the form
\begin{equation} \label{eq:general2} 
  \begin{array}{cl}
    \min_{x, z} & f_1(x) + f_2(z) \\
    \sthat & g(x) = 0 \\
    & h_1(x) + h_2(z) = 0,
  \end{array}
\end{equation}
where $g(x) = \max\{0,\, g_0(x)\}^2$, which by construction is again
convex in every component and differentiable if $g(x)$ is differentiable. 
Note, though, that the constraint $g(x)= 0$
is no longer affine. However, we show in the following that
Problem~\ref{eq:general2} can still be solved efficiently.

Analogously to ADMM our extension builds on the Augmented
Lagrangian for Problem~\ref{eq:general2} which is the following
function
\begin{align*}
  L_\rho (x, z, \mu, \lambda)
  &= f_1(x) + f_2(z) + \frac{\rho}{2}
  \norm{g(x)}^2 + \mu^\top g(x) \\
  &\quad + \frac{\rho}{2} \norm{h_{1}(x) + h_{2}(z)}^2 +
  \lambda^\top \left(h_{1}(x) + h_{2}(z)\right),
\end{align*}
where $\mu\in\setR^p$ and $\lambda\in\setR^m$ are Lagrange
multipliers, $\rho >0$ is some constant, and $\norm{\cdot}$ denotes
the Euclidean norm. The Lagrange multipliers are also referred to as
dual variables.

Algorithm~\ref{algo:1} is our extension of ADMM for solving instances
of Problem~\ref{eq:general2}. It runs in iterations. In the $(k+1)$-th
iteration the primal variables $x^k$ and $z^k$ as well as the dual
variables $\mu^k$ and $\lambda^k$ are updated.

\begin{algorithm}[h!]
  \caption{ADMM for problems with non-linear constraints}
  \label{algo:1}
  \begin{algorithmic}[1]
    \STATE {\bfseries input:} instance of Problem~\ref{eq:general2}
    \STATE {\bfseries output:} approximate solution $x\in\setR^{n_1},
    z\in\setR^{n_2}, \mu\in\setR^{p}, \lambda\in\setR^{m}$ 
    \STATE initialize $x^0 = 0$, $z^0 = 0$, $\mu^0 = 0$, $\lambda^0 = 0$, and $\rho$ to some constant $>0$
    \REPEAT
    \STATE  $x^{k+1} :=\quad \argmin_{x}\, L_\rho(x, z^k, \mu^k, \lambda^k)$ \label{algo:x}
    \STATE  $z^{k+1} :=\quad \argmin_{z}\, L_\rho(x^{k+1}, z, \mu^k, \lambda^k)$ \label{algo:z}
    \STATE $\mu^{k+1} :=\quad  \mu^k + \rho g(x^{k+1})$ \label{algo:mu}	
    \STATE $\lambda^{k+1} :=\quad  \lambda^k + \rho\left(h_1(x^{k+1}) + h_2(z^{k+1})\right)$ \label{algo:lambda}
    \UNTIL{convergence}
    \RETURN $x^k, z^k, \mu^k, \lambda^k$
  \end{algorithmic}
\end{algorithm}

\section{Convergence analysis}
\label{sec:convergence}

From duality theory we know that for all $x\in\setR^{n_1}$ and
$z\in\setR^{n_2}$
\begin{equation} \label{eq:assumption}
L_0(x^*, z^*, \mu^*, \lambda^*) \leq L_0(x, z, \mu^*, \lambda^*),
\end{equation}
where $L_0$ is the Lagrangian of Problem~\ref{eq:general2} and $x^*,
z^*, \mu^*$, and $\lambda^*$ are optimal primal and dual
variables. Note, that $x^*, z^*, \mu^*$, and $\lambda^*$ are not
necessarily unique. Here, they refer just to one optimal solution.
Also note that the Lagrangian is identical to the Augmented Lagrangian
with $\rho = 0$. Given that strong duality holds, the optimal solution
to the original Problem~\ref{eq:general2} is identical to the optimal
solution of the Lagrangian dual.

We need a few more definitions.  Let $f^k = f_1(x^k) + f_2(z^k)$ be
the objective function value at the $k$-th iterate $(x^k,z^k)$ and let
$f^*$ be the optimal function value. Let $r_g^k = g(x^k)$ be the
residual of the nonlinear equality constraints, i.e., the constraints
originating from the convex inequality constraints, and let $r_h^k =
h_1(x^k) + h_2(z^k)$ be the residual of the linear equality
constraints in iteration $k$.

Our goal in this section is to prove the following theorem. 

\begin{theorem}\label{thm:1}
When Algorithm~\ref{algo:1} is applied to an instance of
Problem~\ref{eq:general2}, then
\[
\lim_{k\to\infty} r_g^k = 0, \quad \lim_{k\to\infty} r_h^k = 0, \quad
\textrm{ and }\quad \lim_{k\to\infty} f^k = f^*.
\]
\end{theorem}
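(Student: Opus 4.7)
The plan is to follow the classical ADMM convergence argument (as in Boyd et al., Section~3.2), adapting it to the two-block setting of Problem~\ref{eq:general2} with one nonlinear and one affine coupling constraint. The target is a Lyapunov-type decrease for
\[
V^k \;=\; \tfrac{1}{\rho}\norm{\mu^k - \mu^*}^2 + \tfrac{1}{\rho}\norm{\lambda^k - \lambda^*}^2 + \rho\norm{h_2(z^k) - h_2(z^*)}^2
\]
of the form
\[
V^{k+1} \;\leq\; V^k \;-\; \rho\norm{r_g^{k+1}}^2 \;-\; \rho\norm{r_h^{k+1}}^2 \;-\; \rho\norm{h_2(z^{k+1}) - h_2(z^k)}^2.
\]
Telescoping then yields $\sum_k \norm{r_g^{k+1}}^2 < \infty$ and $\sum_k \norm{r_h^{k+1}}^2 < \infty$, delivering the first two conclusions of the theorem.

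The next step is to write down the first-order optimality conditions for the two subproblems. The $x$-update in line~\ref{algo:x}, together with the dual updates in lines~\ref{algo:mu} and~\ref{algo:lambda}, gives
\[
0 \in \partial f_1(x^{k+1}) + \nabla g(x^{k+1})^\top \mu^{k+1} + \nabla h_1(x^{k+1})^\top\bigl(\lambda^{k+1} - \rho(h_2(z^{k+1}) - h_2(z^k))\bigr),
\]
after using the identity $\lambda^k + \rho(h_1(x^{k+1}) + h_2(z^k)) = \lambda^{k+1} - \rho(h_2(z^{k+1})-h_2(z^k))$; the $z$-update in line~\ref{algo:z} gives $0 \in \partial f_2(z^{k+1}) + \nabla h_2(z^{k+1})^\top \lambda^{k+1}$. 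Pairing these with $x^{k+1}-x^*$ and $z^{k+1}-z^*$ and invoking convexity of $f_1,f_2$ yields an upper bound on $f^{k+1}-f^*$ whose cross terms, once $\mu^{k+1}-\mu^k = \rho r_g^{k+1}$ and $\lambda^{k+1}-\lambda^k = \rho r_h^{k+1}$ are substituted in, rearrange into the desired telescoping form.

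The subtle point that is genuinely new here is the nonlinear $g$, and this is where I expect the main difficulty to lie. In standard ADMM, affineness of all coupling terms turns $h_1(x^{k+1}) - h_1(x^*)$ into a linear function of $x^{k+1}-x^*$ and this identity is used in both directions. For our convex but nonlinear $g$ only the one-sided subgradient inequality is available; componentwise, since $g(x^*)=0$,
\[
g(x^{k+1}) \;\geq\; \nabla g(x^*)(x^{k+1}-x^*).
\]
By construction $g = \max\{0,g_0\}^2 \geq 0$, so the equality $g(x)=0$ is equivalent to the inequality $g(x)\leq 0$ of the original Problem~\ref{eq:general}, and the associated multipliers therefore satisfy $\mu^*\geq 0$. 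Multiplying the subgradient inequality by $\mu^*$ preserves the sign and supplies the one-sided bound $(\mu^*)^\top\nabla g(x^*)(x^{k+1}-x^*) \leq (\mu^*)^\top g(x^{k+1}) = (\mu^*)^\top r_g^{k+1}$, which is exactly the direction required to close the Lyapunov manipulation. Nonlinearity enters nowhere else.

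For the objective-value conclusion, the dual inequality~\eqref{eq:assumption} evaluated at $(x^{k+1},z^{k+1})$ reads $f^{k+1} - f^* + (\mu^*)^\top r_g^{k+1} + (\lambda^*)^\top r_h^{k+1} \geq 0$, so the lower bound $f^{k+1} - f^* \geq -(\mu^*)^\top r_g^{k+1} - (\lambda^*)^\top r_h^{k+1}$ tends to zero. The matching upper bound is obtained from the stationarity-plus-convexity computation above and is a linear combination of the two residuals with coefficients $\mu^{k+1},\lambda^{k+1}$, which are bounded because $V^k$ is non-increasing. Since the residuals vanish, both sides squeeze $f^{k+1}-f^*$ to zero, completing the proof.
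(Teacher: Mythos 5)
Your overall architecture coincides with the paper's proof: the same Lyapunov function $V^k$, the same target decrease $\rho\norm{r_g^{k+1}}^2+\rho\norm{r_h^{k+1}}^2+\rho\norm{h_2(z^{k+1})-h_2(z^k)}^2\leq V^k-V^{k+1}$, the same substitution of the dual updates into the two stationarity conditions, the lower bound from Inequality~\ref{eq:assumption}, and the final squeeze. The only stylistic difference is that you phrase the $x$-step as ``pair the stationarity condition with $x^{k+1}-x^*$ and invoke convexity,'' whereas the paper observes that $x^{k+1}$ globally minimizes the auxiliary convex function $x\mapsto f_1(x)+g(x)^\top\mu^{k+1}+h_1(x)^\top(\lambda^{k+1}-\rho(h_2(z^{k+1})-h_2(z^k)))$ and compares its values at $x^{k+1}$ and $x^*$; these are equivalent routes.

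There is, however, one genuine gap, and it sits exactly where you claim to have isolated the novelty. You place the difficulty in the term $(\mu^*)^\top\nabla g(x^*)(x^{k+1}-x^*)$ and close it with $\mu^*\geq 0$; but that term never arises, because the lower bound on $f^{k+1}-f^*$ comes directly from the saddle-point inequality~\ref{eq:assumption} with no linearization of $g$ at all. Convexity of $g$ is really needed in the \emph{upper} bound: your stationarity condition contains $\nabla g(x^{k+1})^\top\mu^{k+1}$, and after pairing with $x^{k+1}-x^*$ you must pass from $(\mu^{k+1})^\top\nabla g(x^{k+1})(x^*-x^{k+1})$ to $(\mu^{k+1})^\top\big(g(x^*)-g(x^{k+1})\big)=-(\mu^{k+1})^\top r_g^{k+1}$. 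That step uses the subgradient inequality of $g$ at $x^{k+1}$ (not at $x^*$) and keeps its direction only if $\mu^{k+1}\geq 0$ --- a property of the \emph{iterates}, not of the optimal multiplier, and one you never establish. It does hold, but it needs its own short induction: $\mu^0=0$ and $\mu^{k+1}=\mu^k+\rho\,g(x^{k+1})\geq\mu^k$ because $g=\max\{0,g_0\}^2\geq 0$ and $\rho>0$; this is precisely Lemma~\ref{lem:0} of the paper, and it is also what makes the auxiliary function above convex in the paper's phrasing. Without it the key inequality can point the wrong way, so the claim that ``nonlinearity enters nowhere else'' is not quite right. A second, smaller omission: splitting the Lyapunov decrease into the three separate squared norms requires $(r_h^{k+1})^\top\big(h_2(z^{k+1})-h_2(z^k)\big)\leq 0$, obtained by comparing the $z$-subproblems at consecutive iterations; your ``rearrange into the desired telescoping form'' silently absorbs this step, which is carried out explicitly at the end of the paper's proof of Lemma~\ref{lem:3}.
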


The theorem states primal feasibility and convergence of the primal
objective function value. Note, however, that convergence to primal
optimal points $x^*$ and $z^*$ cannot be guaranteed. This is the case
for the original ADMM as well. Additional assumptions on the problem,
like, for instance, a unique optimum, are
necessary to guarantee convergence to the primal optimal
points. However, the points $x^k, z^k$ will be primal optimal and
feasible up to an arbitrarily small error for sufficiently large $k$.

The proof of Theorem~\ref{thm:1} follows along the lines of the
convergence proof for the original ADMM in~\cite{boydADMM} and is
subdivided into four lemmas.

\begin{lemma} \label{lem:0}
  The dual variables $\mu^k$ are non-negative for all iterations,
  i.e., it holds that $\mu^k \geq 0$ for all $k\in\mathbb{N}$.
\end{lemma}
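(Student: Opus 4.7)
The proof is a straightforward induction on $k$, exploiting the fact that the reformulated constraint function $g$ is non-negative componentwise by construction.

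My plan is as follows. First, I would observe that $g(x) = \max\{0, g_0(x)\}^2 \geq 0$ componentwise for every $x \in \setR^{n_1}$, simply because it is defined as the componentwise square of a quantity that is already componentwise non-negative. This is the key structural property that was built into the reformulation from Problem~\ref{eq:general} to Problem~\ref{eq:general2}.

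Next, I would set up the induction. The base case is immediate from the initialization in line~3 of Algorithm~\ref{algo:1}, where $\mu^0 = 0 \geq 0$. For the inductive step, assuming $\mu^k \geq 0$, the update rule in line~\ref{algo:mu} gives
\[
\mu^{k+1} = \mu^k + \rho\, g(x^{k+1}).
\]
Since $\mu^k \geq 0$ by the inductive hypothesis, $\rho > 0$ by initialization, and $g(x^{k+1}) \geq 0$ componentwise by the first observation, the right-hand side is a sum of two componentwise non-negative vectors and hence $\mu^{k+1} \geq 0$.

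There is really no obstacle here; the lemma is almost a bookkeeping remark. Its importance is that it lets later lemmas treat $\mu^k$ as a legitimate Lagrange multiplier for inequality constraints (which must be non-negative by KKT theory), even though the algorithm itself processes $g(x) = 0$ as an equality constraint. I expect this non-negativity to be used subsequently to relate the Augmented-Lagrangian dynamics of Algorithm~\ref{algo:1} back to the original inequality-constrained Problem~\ref{eq:general}, in particular when invoking the duality inequality~\eqref{eq:assumption}.
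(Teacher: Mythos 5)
Your proof is correct and is essentially identical to the paper's: both argue by induction from $\mu^0=0$, using the update rule $\mu^{k+1}=\mu^k+\rho\,g(x^{k+1})$ together with $g(x)=\max\{0,\,g_0(x)\}^2\geq 0$ and $\rho>0$. Your added remarks on why the lemma matters downstream are accurate but not part of the paper's proof.
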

\begin{proof}
  The proof is by induction. In Line~3 of Algorithm~\ref{algo:1} the
  dual variable is initialized as $\mu^0=0$. If $\mu^k \geq 0$, then
  it follows from the update rule in Line~7 of Algorithm~\ref{algo:1}
  that
  \[
  \mu^{k+1} = \mu^k + \rho g(x) \geq 0,
  \]
  since $g(x) = \max\{0,\, g_0(x)\}^2 \geq 0$ and by assumption also
  $\rho >0$.
\end{proof}


\begin{lemma} \label{lem:1}
  The difference between the optimal objective function value and its
  value at the $(k+1)$-th iterate can be bounded as
  \[
  f^* - f^{k+1} \leq (\mu^*)^\top r_g^{k+1} + (\lambda^*)^\top r_h^{k+1}
  \]
\end{lemma}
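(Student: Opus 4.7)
The plan is to apply the duality inequality \eqref{eq:assumption} directly with the current iterates $(x^{k+1},z^{k+1})$ substituted on the right-hand side, and then simplify both sides.

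First, I would evaluate $L_0(x^*,z^*,\mu^*,\lambda^*)$. Since $(x^*,z^*)$ is primal feasible for Problem~\ref{eq:general2}, we have $g(x^*)=0$ and $h_1(x^*)+h_2(z^*)=0$, so the penalty-free Lagrangian collapses to
\[
L_0(x^*,z^*,\mu^*,\lambda^*) = f_1(x^*)+f_2(z^*) = f^*.
\]
Next, I would expand the right-hand side at $(x^{k+1},z^{k+1})$, namely
\[
L_0(x^{k+1},z^{k+1},\mu^*,\lambda^*)
= f^{k+1} + (\mu^*)^\top g(x^{k+1}) + (\lambda^*)^\top\bigl(h_1(x^{k+1})+h_2(z^{k+1})\bigr),
\]
and recognize $g(x^{k+1}) = r_g^{k+1}$ and $h_1(x^{k+1})+h_2(z^{k+1}) = r_h^{k+1}$ by the definitions given just before the theorem. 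Substituting both sides into \eqref{eq:assumption} and rearranging yields the claimed inequality.

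There is no real obstacle here: the lemma is essentially just the saddle-point inequality for the Lagrangian of Problem~\ref{eq:general2}, which is guaranteed by the strong-duality assumption made in Section~\ref{sec:problem}. The only point worth checking is that \eqref{eq:assumption} genuinely holds as stated for Problem~\ref{eq:general2}, even though its equality constraint $g(x)=0$ is not affine. This is legitimate because strong duality for the original Problem~\ref{eq:general} (via Slater's condition) transfers to Problem~\ref{eq:general2}: the two problems have the same feasible set and the same optimal value, so the optimal multipliers $(\mu^*,\lambda^*)$ from Problem~\ref{eq:general} (with $\mu^*\geq 0$) give a saddle point of $L_0$ for Problem~\ref{eq:general2} as well, which is exactly what \eqref{eq:assumption} asserts.
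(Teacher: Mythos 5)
Your proof is correct and is essentially identical to the paper's: both apply the saddle-point inequality \eqref{eq:assumption} at $(x^{k+1},z^{k+1})$, use $g(x^*)=0$ and $h_1(x^*)+h_2(z^*)=0$ to collapse $L_0(x^*,z^*,\mu^*,\lambda^*)$ to $f^*$, expand the other side in terms of $r_g^{k+1}$ and $r_h^{k+1}$, and rearrange. Your closing remark on why \eqref{eq:assumption} survives the non-affine reformulation is extra diligence the paper does not attempt (it simply asserts the inequality ``from duality theory''), so it does not affect the comparison.
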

\begin{proof}
  It follows from the definitions, vanishing constraints in an
  optimum, and Inequality~\ref{eq:assumption} that
  \begin{align*}
    f^*
    & = f_1(x^*) + f_2(z^*) \\
    & = L_0(x^*, z^*, \mu^*, \lambda^*) \\
    & \leq L_0(x^{k+1}, z^{k+1}, \mu^*, \lambda^*) \\
    & = f_1(x^{k+1}) + f_2(z^{k+1}) + (\mu^*)^\top r_g^{k+1} +
    (\lambda^*)^\top r_h^{k+1} \\
    & = f^{k+1} + (\mu^*)^\top r_g^{k+1} + (\lambda^*)^\top r_h^{k+1}.
  \end{align*}
\end{proof}

\begin{lemma} \label{lem:2}
  The difference between the value of the objective function at the
  $(k+1)$-th iterate and its optimal value can be bounded as follows
  \begin{align*}
    f^{k+1} - f^* \leq &-(\mu^{k+1})^\top r_g^{k+1} -
    (\lambda^{k+1})^\top r_h^{k+1} \\
    &- \rho\left(h_2(z^{k+1}) - h_2(z^{k})\right)^\top
    \left(-r_h^{k+1} + h_2(z^{k+1}) - h_2(z^*)\right).
  \end{align*}
\end{lemma}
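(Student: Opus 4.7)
The plan is to mimic the classical ADMM convergence argument by exploiting the optimality of the subproblems solved in Lines~5 and~6 of Algorithm~\ref{algo:1}. I would first write down the first-order condition for the $z$-update. Because $z^{k+1}$ minimizes $L_\rho(x^{k+1}, z, \mu^k, \lambda^k)$ and $h_2$ is affine with constant Jacobian $A_2$, the subgradient condition collapses, after substituting the update rule for $\lambda^{k+1}$, to $-A_2^\top \lambda^{k+1} \in \partial f_2(z^{k+1})$. Convexity of $f_2$ then gives the ``$z$-inequality''
\[
f_2(z^{k+1}) + (\lambda^{k+1})^\top h_2(z^{k+1}) \;\leq\; f_2(z^*) + (\lambda^{k+1})^\top h_2(z^*).
\]

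Next I would derive the analogous ``$x$-inequality'' starting from the $x$-subgradient condition
\[
0 \in \partial f_1(x^{k+1}) + \nabla g(x^{k+1})^\top\!\bigl(\mu^k + \rho g(x^{k+1})\bigr) + A_1^\top\!\bigl(\lambda^k + \rho(h_1(x^{k+1}) + h_2(z^k))\bigr).
\]
Substituting the dual updates $\mu^{k+1} = \mu^k + \rho g(x^{k+1})$ and $\lambda^{k+1} = \lambda^k + \rho(h_1(x^{k+1}) + h_2(z^{k+1}))$ introduces the correction $-\rho A_1^\top(h_2(z^{k+1}) - h_2(z^k))$, since the $x$-update uses $z^k$ while the $\lambda$-update uses $z^{k+1}$. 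The critical step is to reinterpret this condition as saying that $x^{k+1}$ minimizes the convex function
\[
\psi(x) \;=\; f_1(x) + (\mu^{k+1})^\top g(x) + (\lambda^{k+1})^\top h_1(x) - \rho(h_2(z^{k+1}) - h_2(z^k))^\top h_1(x).
\]
For this I invoke Lemma~\ref{lem:0} (so $\mu^{k+1} \geq 0$) together with componentwise convexity of $g$ to guarantee both that $\psi$ is convex and that $\nabla g(x^{k+1})^\top \mu^{k+1}$ is a valid subgradient of $(\mu^{k+1})^\top g(\cdot)$ at $x^{k+1}$. Evaluating $\psi(x^{k+1}) \leq \psi(x^*)$ and using $g(x^*)=0$ eliminates the $g$-term on the right-hand side.

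To finish, I would add the $x$- and $z$-inequalities, apply primal feasibility $h_1(x^*) + h_2(z^*) = 0$ to cancel the $\lambda^{k+1}$-contribution at the optimum, and regroup terms to arrive at
\[
f^{k+1} - f^* \;\leq\; -(\mu^{k+1})^\top r_g^{k+1} - (\lambda^{k+1})^\top r_h^{k+1} + \rho(h_2(z^{k+1}) - h_2(z^k))^\top\bigl(h_1(x^{k+1}) - h_1(x^*)\bigr).
\]
Substituting $h_1(x^{k+1}) = r_h^{k+1} - h_2(z^{k+1})$ and $h_1(x^*) = -h_2(z^*)$ then rewrites the last bracket as $r_h^{k+1} - (h_2(z^{k+1}) - h_2(z^*))$, which matches the claim after pulling out a minus sign.

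The main obstacle is the nonlinear term $(\mu^{k+1})^\top g(x)$: in textbook ADMM the analogous quantity is linear in $x$ and the passage from the first-order condition to a global inequality is purely algebraic, whereas here going from $0 \in \partial\psi(x^{k+1})$ to $\psi(x^{k+1}) \leq \psi(x^*)$ genuinely requires that $\psi$ be convex along the whole line from $x^{k+1}$ to $x^*$, and this is exactly what Lemma~\ref{lem:0} (combined with the componentwise convexity of $g$) supplies. Once this step is secured, the rest is mechanical cancellation using primal feasibility of $(x^*, z^*)$.
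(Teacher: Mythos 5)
Your proposal is correct and follows essentially the same route as the paper's proof: both derive the subgradient optimality conditions for the $x$- and $z$-updates, substitute the dual update rules to reinterpret $x^{k+1}$ and $z^{k+1}$ as minimizers of the auxiliary convex functions $\psi(x)=f_1(x)+(\mu^{k+1})^\top g(x)+(h_1(x))^\top(\lambda^{k+1}-\rho(h_2(z^{k+1})-h_2(z^k)))$ and $f_2(z)+(\lambda^{k+1})^\top h_2(z)$, invoke Lemma~\ref{lem:0} to secure convexity of the $g$-term, evaluate at $(x^*,z^*)$, and sum and regroup using $g(x^*)=0$ and $h_1(x^*)+h_2(z^*)=0$. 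The final algebraic substitution $h_1(x^{k+1})=r_h^{k+1}-h_2(z^{k+1})$, $h_1(x^*)=-h_2(z^*)$ also matches the paper's manipulation, so nothing further is needed.
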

\begin{proof}
  From Line~5 of Algorithm~1 we know that $x^{k+1}$ minimizes the
  function $L_\rho(x, z^k, \mu^k, \lambda^k)$ with respect to
  $x$. Hence, we know that $0$ must be contained in the
  subdifferential of $L_\rho(x, z^k, \mu^k, \lambda^k)$ with respect
  to $x$ at $x^{k+1}$, i.e.,
  \begin{align*}
    0\,\in &\: \partial f_1(x^{k+1}) + \rho \cdot \partial g(x^{k+1})
    \cdot g(x^{k+1}) + \partial g(x^{k+1}) \cdot \mu^k \\ &\: + \rho
    \cdot \partial h_1(x^{k+1}) \cdot \left(h_1(x^{k+1}) +
    h_2(z^k)\right) + \partial h_1(x^{k+1}) \cdot \lambda^k,
  \end{align*}
  where $\partial f_1(x^{k+1})\in\setR^{n_1}$ is the subdifferential
  of $f_1$ at $x^{k+1}$, $\partial g(x^{k+1})\in\setR^{n_1 \times p}$
  is the subdifferential of $g$ at $x^{k+1}$, and $\partial
  h_1(x^{k+1})\in\setR^{n_1 \times m}$ is the subdifferential of $h_1$
  at $x^{k+1}$.

  The update rule for the dual variables $\mu$ in Line~7 of
  Algorithm~1 gives
  \[
  \mu^k = \mu^{k+1} - \rho g(x^{k+1})
  \]
  and similarly, the update rule for the dual variables $\lambda$ in
  Line~8 gives
  \[
  \lambda^k = \lambda^{k+1} - \rho \left(h_1(x^{k+1}) + h_2(z^{k+1})\right).
  \]
  Plugging these update rules into the subdifferential optimality
  condition from above gives
  \begin{align*}
    0 \,\in &\: \partial f_1(x^{k+1}) + \rho \cdot \partial g(x^{k+1})
    \cdot g(x^{k+1}) + \partial g(x^{k+1}) \cdot \left(\mu^{k+1} -
    \rho g(x^{k+1}) \right) \\ & + \rho \cdot \partial h_1(x^{k+1})
    \cdot \left(h_1(x^{k+1}) + h_2(z^k)\right) + \partial h_1(x^{k+1})
    \cdot \left(\lambda^{k+1} - \rho h_1(x^{k+1}) - \rho
    h_2(z^{k+1})\right),
  \end{align*}
  and thus
  \[
  0\in \partial f_1(x^{k+1}) + \partial g(x^{k+1}) \cdot \mu^{k+1} +
  \partial h_1(x^{k+1}) \cdot \left(\lambda^{k+1} - \rho
  \left(h_2(z^{k+1}) - h_2(z^k)\right)\right).
  \]
  If $0$ is contained in the subdifferential of a convex function at
  point $x$, then $x$ is a minimizer of this function. That is,
  $x^{k+1}$ minimizes the convex function
  \begin{equation} \label{eq:minx}
    x \mapsto f_1(x) + (g(x))^\top \mu^{k+1} + (h_1(x))^\top
    \left(\lambda^{k+1} - \rho \left(h_2(z^{k+1}) -
    h_2(z^k)\right)\right).
  \end{equation}
  This function is convex, because $f_1$ and $g$ are convex functions,
  $h_1$ is an affine function, and any non-negative combination of
  convex functions is again a convex function. Note that we have
  $\mu^{k+1} \geq 0$ by Lemma~1.
  
  Similarly, Line~6 of Algorithm~1 implies that $0$ is contained in
  the subdifferential of $L_\rho(x^{k+1}, z, \mu^k, \lambda^k)$ with
  respect to $z$ at $z^{k+1}$, i.e.,
  \[
  0\in \partial f_2(z^{k+1}) + \rho \cdot \partial h_2(z^{k+1}) \cdot
  \left(h_1(x^{k+1}) + h_2(z^{k+1})\right) + \partial h_2(z^{k+1})
  \cdot \lambda^k.
  \]
  Again, substituting $\lambda^k = \lambda^{k+1} -
  \rho\left(h_1(x^{k+1}) + h_2(z^{k+1})\right)$ we get
  \[
  0 \in \partial f_2(z^{k+1}) + \partial h_2(z^{k+1}) \cdot
  \lambda^{k+1}.
  \]
  Hence, $z^{k+1}$ minimizes the convex function
  \begin{equation} \label{eq:minz}
    z \mapsto f_2(z) + (\lambda^{k+1})^\top h_2(z).
  \end{equation}
  The function is convex since $f_2$ is convex and $h_2$ is affine.
  
  Since $x^{k+1}$ is a minimizer of Function~\ref{eq:minx} we have
  \begin{align*}
    &f_1(x^{k+1}) + (g(x^{k+1}))^\top \mu^{k+1} + (h_1(x^{k+1}))^\top
    \left(\lambda^{k+1} - \rho \left(h_2(z^{k+1}) -
    h_2(z^k)\right)\right) \\
    &\quad \leq\: f_1(x^*) + (g(x^*))^\top \mu^{k+1} + (h_1(x^*))^\top
    \left(\lambda^{k+1} - \rho \left(h_2(z^{k+1}) -
    h_2(z^k)\right)\right).
  \end{align*}
  Analogously, since $z^{k+1}$ minimizes Function~\ref{eq:minz} we
  have
  \[
  f_2(z^{k+1}) + (\lambda^{k+1})^\top h_2(z^{k+1}) \leq f_2(z^*) +
  (\lambda^{k+1})^\top h_2(z^*).
  \]
  Finally, from summing up both inequalities and rearranging we get
  \begin{align*}
    &f^{k+1} - f^* \, =\, f_1(x^{k+1}) + f_2(z^{k+1}) - f_1(x^*) - f_2(z^*) \\
    &\leq (g(x^*))^\top \mu^{k+1} + (h_1(x^*))^\top
    \left(\lambda^{k+1} - \rho \left(h_2(z^{k+1}) - h_2(z^k)\right)\right) \\
    &\quad- (g(x^{k+1}))^\top \mu^{k+1} - (h_1(x^{k+1}))^\top
    \left(\lambda^{k+1} - \rho \left(h_2(z^{k+1}) - h_2(z^k)\right)\right) \\
    &\quad+ (\lambda^{k+1})^\top h_2(z^*) - (\lambda^{k+1})^\top
    h_2(z^{k+1}) \\
    &= - (g(x^{k+1}))^\top \mu^{k+1} +
    (\lambda^{k+1})^\top \left(h_1(x^*) + h_2(z^*)\right)
    -(\lambda^{k+1})^\top \left(h_1(x^{k+1}) + h_2(z^{k+1})\right) \\
    &\quad-\rho\left(h_2(z^{k+1}) -h_2(z^k)\right)^\top\left(h_1(x^*) -
    h_1(x^{k+1})\right) \\
    &= -(\mu^{k+1})^\top r_g^{k+1} -(\lambda^{k+1})^\top r_h^{k+1} \\
    &\quad - \rho\left(h_2(z^{k+1}) -h_2(z^k)\right)^\top\left(h_1(x^*) -
    h_1(x^{k+1})\right) \\
    &= -(\mu^{k+1})^\top r_g^{k+1} - (\lambda^{k+1})^\top r_h^{k+1} \\
    &\quad - \rho\left(h_2(z^{k+1}) -h_2(z^k)\right)^\top\left(h_1(x^*) +
    h_2(z^*) - h_2(z^*) - h_1(x^{k+1}) - h_2(z^{k+1}) +
    h_2(z^{k+1})\right) \\
    &= -(\mu^{k+1})^\top r_g^{k+1} - (\lambda^{k+1})^\top r_h^{k+1} \\
    &\quad -\rho\left(h_2(z^{k+1})
    -h_2(z^k)\right)^\top\left(-h_2(z^*) - r_h^{k+1} + h_2(z^{k+1})\right),
  \end{align*}
  where we have used that $g(x^*)= 0$, $h_1(x^*) + h_2(z^*) = 0$,
  $r^{k+1}_g = g(x^{k+1})$ is the residual of the convex constraints,
  and $r^{k+1}_h = h_1(x^{k+1}) + h_2(z^{k+1})$ is the residual of the
  affine constraints in iteration $k+1$.
\end{proof}

To continue, we need one more definition.

\begin{definition} \label{def:1}
  Let
  \[
  V^k =\:\, \frac{1}{\rho}\|\mu^k - \mu^*\|^2 +
  \frac{1}{\rho}\|\lambda^k - \lambda^*\|^2 \\
  + \rho \|h_2(z^k) - h_2(z^*)\|^2.
  \]
\end{definition}

For this newly defined quantity we show in the following lemma that it
is non-increasing over the iterations. This property will be crucial
for the proof of Theorem~\ref{thm:1}.

\begin{lemma} \label{lem:3}
  For every iteration $k\in\mathbb{N}$ it holds that
  \[
    \rho\|r_g^{k+1}\|^2 + \rho\|r_h^{k+1}\|^2 + \rho\|h_2(z^{k+1}) -
    h_2(z^k)\|^2 \leq V^k - V^{k+1}.
  \]
\end{lemma}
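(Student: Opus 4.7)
The plan is to follow the Lyapunov-style template used by Boyd et al.\ for standard ADMM: expand $V^k - V^{k+1}$ algebraically using the dual updates, add the bounds from Lemma~\ref{lem:1} and Lemma~\ref{lem:2} so that the function-value terms cancel, and reduce the lemma to a single non-positivity condition on an inner product that comes from consecutive $z$-update optimalities.

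First I would expand each of the three squared terms in $V^k-V^{k+1}$. Using $\mu^{k+1}-\mu^k = \rho\, r_g^{k+1}$, $\lambda^{k+1}-\lambda^k = \rho\, r_h^{k+1}$, and the algebraic identity $\|b\|^2-\|a\|^2 = -2a^\top(a-b) + \|a-b\|^2$, one obtains
\begin{align*}
V^k - V^{k+1}
&= \rho\|r_g^{k+1}\|^2 + \rho\|r_h^{k+1}\|^2 + \rho\|h_2(z^{k+1})-h_2(z^k)\|^2 \\
&\quad - 2(\mu^{k+1}-\mu^*)^\top r_g^{k+1} - 2(\lambda^{k+1}-\lambda^*)^\top r_h^{k+1} \\
&\quad - 2\rho\bigl(h_2(z^{k+1})-h_2(z^*)\bigr)^\top\bigl(h_2(z^{k+1})-h_2(z^k)\bigr),
\end{align*}
so the claim reduces to showing that the three ``cross'' terms on the last two lines have non-negative sum. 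Adding Lemma~\ref{lem:1} and Lemma~\ref{lem:2} cancels $f^{k+1}-f^*$ and, after multiplying by $2$, yields
\[
-2(\mu^{k+1}-\mu^*)^\top r_g^{k+1} - 2(\lambda^{k+1}-\lambda^*)^\top r_h^{k+1} \geq 2\rho\bigl(h_2(z^{k+1})-h_2(z^k)\bigr)^\top\bigl(-r_h^{k+1} + h_2(z^{k+1}) - h_2(z^*)\bigr).
\]
Substituting this lower bound into the cross-term sum, the $h_2(z^{k+1})-h_2(z^*)$ pieces cancel and what remains is the scalar inequality
\[
\bigl(h_2(z^{k+1}) - h_2(z^k)\bigr)^\top r_h^{k+1} \leq 0,
\]
which by $\rho > 0$ gives exactly what is needed.

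This last inner-product inequality is the only non-routine step and is the $z$-block analogue of the firm non-expansion that underlies the Boyd convergence proof. To establish it I would use the optimality conditions for two consecutive $z$-updates: the derivation already done inside the proof of Lemma~\ref{lem:2} shows that $z^{k+1}$ minimizes $z \mapsto f_2(z) + (\lambda^{k+1})^\top h_2(z)$, and the same argument applied one iteration earlier shows that $z^k$ minimizes $z \mapsto f_2(z) + (\lambda^k)^\top h_2(z)$. Cross-evaluating each of these functions at the other iterate, adding the two resulting inequalities, and cancelling $f_2(z^k)$ and $f_2(z^{k+1})$ yields $(\lambda^{k+1}-\lambda^k)^\top(h_2(z^{k+1})-h_2(z^k)) \leq 0$; using $\lambda^{k+1}-\lambda^k = \rho\, r_h^{k+1}$ and dividing by $\rho>0$ concludes. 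Notice that the nonlinearity of $g$ plays no role in this whole argument beyond guaranteeing $\mu^k \geq 0$ via Lemma~\ref{lem:0}, which is what makes the function in \eqref{eq:minx} convex and thereby validates the subdifferential manipulations that feed Lemma~\ref{lem:2}; the rest is the same Lyapunov arithmetic as in the linear-constraint case.
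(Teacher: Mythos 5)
Your proof is correct and follows essentially the same route as the paper's: both sum Lemma~\ref{lem:1} and Lemma~\ref{lem:2}, telescope the dual terms via the update rules $\mu^{k+1}-\mu^k=\rho r_g^{k+1}$ and $\lambda^{k+1}-\lambda^k=\rho r_h^{k+1}$, and close the argument with the inequality $(\lambda^{k+1}-\lambda^k)^\top\bigl(h_2(z^{k+1})-h_2(z^k)\bigr)\leq 0$ obtained from the optimality of two consecutive $z$-updates. The only difference is presentational (you expand $V^k-V^{k+1}$ first and then cancel, whereas the paper assembles $V^k-V^{k+1}$ from the summed inequality), and your version even inherits the same benign $k=0$ convention for the $z^k$-optimality step that the paper uses.
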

\begin{proof}
Summing up the inequality in Lemma~2 and the inequality in Lemma~3
gives
\begin{align*}
  0 \,\leq
  &\: (\mu^*)^\top r^{k+1}_g - (\mu^{k+1})^\top r^{k+1}_g +
  (\lambda^*)^\top r_h^{k+1} - (\lambda^{k+1})^\top r_h^{k+1} \\
  &\: - \rho\left(h_2(z^{k+1}) -
  h_2(z^{k})\right)^\top\left(-r_h^{k+1} + h_2(z^{k+1}) -
  h_2(z^*)\right),
\end{align*}
or equivalently, by rearranging and multiplying by $2$,
\begin{align} \label{eq:long}
  0 \,\geq
  &\: 2(\mu^{k+1} - \mu^*)^\top r_g^{k+1} +
  2(\lambda^{k+1}-\lambda^*)^\top r_h^{k+1} \nonumber \\ 
  &\: + 2\rho\left(h_2(z^{k+1}) -
  h_2(z^{k})\right)^\top\left(-r_h^{k+1} + h_2(z^{k+1}) -
  h_2(z^*)\right).
\end{align}
Next we are rewriting the three terms in this inequality individually.

Using the update rule $\mu^{k+1} = \mu^k + \rho g(x^{k+1}) = \mu^k +
\rho r_g^{k+1}$ for the Lagrange multipliers $\mu$ in Line~7 of
Algorithm~1 several times we can rewrite the first term as follows
\begin{align*}
  &2(\mu^{k+1}-\mu^*)^\top r_g^{k+1} \\
  &\quad =\: 2(\mu^k + \rho r_g^{k+1} -\mu^*)^\top r_g^{k+1} \\
  &\quad =\: 2(\mu^k -\mu^*)^\top r_g^{k+1} + \rho\|r_g^{k+1}\|^2
  +\rho\|r_g^{k+1}\|^2 \\
  &\quad =\: 2\frac{(\mu^k -\mu^*)^\top (\mu^{k+1} -\mu^k)}{\rho} +
  \frac{\|\mu^{k+1}-\mu^k\|^2}{\rho}  +\rho\|r_g^{k+1}\|^2 \\
  &\quad =\: 2\frac{(\mu^k -\mu^*)^\top
    \left(\mu^{k+1} - \mu^* -(\mu^k -\mu^*)\right)}{\rho} \\
  &\qquad + \frac{\|\mu^{k+1} - \mu^* -(\mu^k - \mu^*)\|^2}{\rho}
  +\rho\|r_g^{k+1}\|^2 \\
  &\quad =\: \frac{2(\mu^k -\mu^*)^\top \left(\mu^{k+1} - \mu^*\right) -
    2\|\mu^k -\mu^*\|^2}{\rho} \\
  &\qquad\: + \frac{\|\mu^{k+1} - \mu^*\|^2 + \|\mu^k - \mu^*\|^2
    - 2(\mu^{k+1} - \mu^*)^\top(\mu^k - \mu^*)}{\rho}  +\rho\|r_g^{k+1}\|^2 \\
  &\quad =\: \frac{\|\mu^{k+1} -\mu^*\|^2 - \|\mu^k - \mu^*\|^2}{\rho}
  +\rho\|r_g^{k+1}\|^2.
\end{align*}

The analogous argument holds for the second term, when using the
update rule $\lambda^{k+1} = \lambda^k + \rho r_h^{k+1}$ in Line~8 of
Algorithm~1, i.e., we have
\[
  2(\lambda^{k+1}-\lambda^*)^\top r_h^{k+1} \,=\,
  \frac{\|\lambda^{k+1} -\lambda^*\|^2 - \|\lambda^k - \lambda^*\|^2}
       {\rho} +\rho\|r_h^{k+1}\|^2.
\]

Adding $\rho\norm{r_h^{k+1}}^2$ to the third term of
Inequality~\ref{eq:long} gives
\begin{align*}
  &\rho\norm{r_h^{k+1}}^2 + 2\rho\left(h_2(z^{k+1}) -
  h_2(z^{k})\right)^\top\left(-r_h^{k+1} + h_2(z^{k+1}) -
  h_2(z^*)\right) \\
  & =\: \rho\|r_h^{k+1}\|^2 - 2\rho\left(h_2(z^{k+1}) -
  h_2(z^k)\right)^\top r_h^{k+1} + 2\rho\left(h_2(z^{k+1}) -
  h_2(z^k)\right)^\top\left(h_2(z^{k+1})-h_2(z^*)\right) \\  
  & =\: \rho\|r_h^{k+1}\|^2 - 2\rho\left(h_2(z^{k+1}) -
  h_2(z^k)\right)^\top r_h^{k+1} \\
  &\quad\:+ 2\rho\left(h_2(z^{k+1}) -
  h_2(z^k)\right)^\top\left(\left(h_2(z^{k+1})-h_2(z^k)\right) +
  \left(h_2(z^k) -h_2(z^*)\right)\right) \\
  & =\: \rho\|r_h^{k+1}\|^2 - 2\rho\left(h_2(z^{k+1}) -
  h_2(z^k)\right)^\top r_h^{k+1} \\
  &\quad\: + 2\rho\|h_2(z^{k+1}) - h_2(z^k)\|^2 +
  2\rho\left(h_2(z^{k+1}) - h_2(z^k)\right)^\top\left(h_2(z^k)
  -h_2(z^*)\right) \\
  & =\: \rho\|r_h^{k+1} - \left(h_2(z^{k+1}) - h_2(z^k)\right)\|^2 +
  \rho\|h_2(z^{k+1}) - h_2(z^k)\|^2 \\
  &\quad\: + 2\rho\left(h_2(z^{k+1}) -
  h_2(z^k)\right)^\top\left(h_2(z^k) -h_2(z^*)\right) \\
  & =\: \rho\|r_h^{k+1} - \left(h_2(z^{k+1}) - h_2(z^k)\right)\|^2 +
  \rho\|\left(h_2(z^{k+1}) - h_2(z^*)\right) - \left(h_2(z^k) -
  h_2(z^*)\right)\|^2 \\
  &\quad\: + 2\rho\left(\left(h_2(z^{k+1}) - h_2(z^*)\right)-
  \left(h_2(z^k)-h_2(z^*)\right)\right)^\top\left(h_2(z^k) -h_2(z^*)\right) \\
  & =\: \rho\|r_h^{k+1} - \left(h_2(z^{k+1}) - h_2(z^k)\right)\|^2 +
  \rho\|h_2(z^{k+1}) - h_2(z^*)\|^2 + \rho\|h_2(z^k) - h_2(z^*)\|^2 \\
  &\quad\: - 2\rho\left(h_2(z^{k+1}) -
  h_2(z^*)\right)^\top\left(h_2(z^k) - h_2(z^*)\right) \\
  &\quad\: + 2\rho\left(h_2(z^{k+1}) -
  h_2(z^*)\right)^\top\left(h_2(z^k) -h_2(z^*)\right) -
  2\rho \left(h_2(z^k)-h_2(z^*)\right)^\top\left(h_2(z^k) -h_2(z^*)\right) \\
  & =\: \rho\|r_h^{k+1} - \left(h_2(z^{k+1}) - h_2(z^k)\right)\|^2 +
  \rho\|h_2(z^{k+1}) - h_2(z^*)\|^2 - \rho\|h_2(z^k) - h_2(z^*)\|^2.
\end{align*}
Hence, Inequality~\ref{eq:long} is equivalent to
\begin{align*}
  0 \,\geq &\: \frac{1}{\rho}\left(\|(\mu^{k+1} -\mu^*\|^2 - \|\mu^k -
  \mu^*\|^2\right) + \rho\|r_g^{k+1}\|^2 \\
  &\:+ \frac{1}{\rho}\left(\|(\lambda^{k+1} -\lambda^*\|^2 -
  \|\lambda^k - \lambda^*\|^2\right) \\
  &\:+ \rho\|r_h^{k+1} - \left(h_2(z^{k+1}) - h_2(z^k)\right)\|^2 +
  \rho\|h_2(z^{k+1}) - h_2(z^*)\|^2 - \rho\|h_2(z^k) - h_2(z^*)\|^2.
\end{align*}
By rearranging the terms in this inequality and using the
following term expansion
\[
\|r_h^{k+1} - (h_2(z^{k+1}) - h_2(z^k))\|^2 = \|r_h^{k+1}\|^2 + \|h_2(z^{k+1}) -
h_2(z^k)\|^2 - 2(r_h^{k+1})^\top \left(h_2(z^{k+1}) - h_2(z^k)\right),
\]
we get
\begin{align*}
  &\rho\|r_g^{k+1}\|^2 + \rho\|r_h^{k+1}\|^2 + \rho\|h_2(z^{k+1}) - h_2(z^k)\|^2
  -2\rho(r_h^{k+1})^\top \left(h_2(z^{k+1}) - h_2(z^k)\right) \\
  &\quad =\:\rho\|r_g^{k+1}\|^2 + \rho\|r_h^{k+1} - \left(h_2(z^{k+1}) -
  h_2(z^k)\right)\|^2 \\
  &\quad \leq\: \frac{1}{\rho}\|\mu^k - \mu^*\|^2 +
  \frac{1}{\rho}\|\lambda^k - \lambda^*\|^2 + \rho \|h_2(z^k) -
  h_2(z^*)\|^2 \\
  &\qquad\: - \left(\frac{1}{\rho}\|\mu^{k+1} - \mu^*\|^2 +
  \frac{1}{\rho}\|\lambda^{k+1} - \lambda^*\|^2 + \rho\|h_2(z^{k+1}) -
  h_2(z^*)\|^2\right) \\
  &\quad =\: V^k - V^{k+1},
\end{align*}
where we have used Definition~1 of $V^k$ in the last equality. Hence,
to finish the proof of Lemma~\ref{lem:3} it only remains to show that
\[
2\rho(r_h^{k+1})^\top \left(h_2(z^{k+1}) - h_2(z^k)\right) \leq 0.
\]
From the proof of Lemma~\ref{lem:2} we know that $z^{k+1}$ minimizes
the function $f_2(z) + (\lambda^{k+1})^\top h_2(z)$ and similarly that
$z^k$ minimizes the function $f_2(z) + (\lambda^k)^\top
h_2(z)$. Hence, we have the following two inequalities
\[
f_2(z^{k+1}) +(\lambda^{k+1})^\top h_2(z^{k+1}) \leq f_2(z^{k})
+(\lambda^{k+1})^\top h_2(z^{k})
\]
and
\[
f_2(z^{k}) +(\lambda^{k})^\top h_2(z^{k}) \leq f_2(z^{k+1})
+(\lambda^{k})^\top h_2(z^{k+1}).
\]
Summing up these two inequalities yields
\[
(\lambda^{k+1})^\top h_2(z^{k+1}) + (\lambda^{k})^\top h_2(z^{k})
\leq (\lambda^{k+1})^\top h_2(z^{k}) + (\lambda^{k})^\top
h_2(z^{k+1}),
\]
or equivalently
\begin{align*}
  0 \,&\geq \: (\lambda^{k+1})^\top \left(h_2(z^{k+1}) -
  h_2(z^k)\right) + (\lambda^{k})^\top \left(h_2(z^{k}) -
  h_2(z^{k+1})\right) \\
  &=\: \left(\lambda^{k+1} - \lambda^k\right)^\top \left(h_2(z^{k+1})
  - h_2(z^k)\right) 
  \:=\: \rho (r_h^{k+1})^\top \left(h_2(z^{k+1}) - h_2(z^k)\right),
\end{align*}
where we have used the update rule for $\lambda$, see again Line~8 of
Algorithm~1. This completes the proof of Lemma~\ref{lem:3}.
\end{proof}

Now we are prepared to prove our main theorem.

\begin{proof}[Proof of Theorem~\ref{thm:1}]
  Using Lemma~\ref{lem:3} and that $V^k \geq 0$ for every iteration
  $k$, see Definition~\ref{def:1}, we can conclude that
  \[
    \rho \left( \sum_{k=0}^\infty \big( \|r_g^{k+1}\|^2 +
    \|r_h^{k+1}\|^2 + \|h_2(z^{k+1}) - h_2(z)\|^2 \big) \right) \leq
    \sum_{k=0}^\infty \big( V^k-V^{k+1} \big) \leq V^0.
  \]
  The series on the right hand side is absolutely convergent, because
  $V^0< \infty$, which follows from the fact that $h_2$ is an affine
  function. The absolute convergence implies
  \[
  \lim_{k\to\infty} r_g^k = 0, \quad 
  \lim_{k\to\infty} r_h^k = 0,
  \]
  and
  \[
  \lim_{k\to\infty} \|h_2(z^{k+1}) - h_2(z^k)\| = 0,
  \]
  i.e., the points $x^k$ and $z^k$ will be primal feasible up to an
  arbitrarily small error for sufficiently large $k$.  Finally, it
  follows from Lemmas~\ref{lem:1} and~\ref{lem:2} that
  $\lim_{k\to\infty} f^k = f^*$, i.e., the points $x^k$ and $z^k$ are
  also primal optimal up to an arbitrarily small error for
  sufficiently large $k$.
\end{proof}

\section{Convex optimization problems with many constraints}
\label{sec:general}

Finally, we are ready to discuss the main problem that we set out to
address in this paper, namely solving general convex optimization
problems with many constraints in a distributed setting by
distributing the constraints. That is, we want to address optimization
problems of the form
\begin{equation} \label{eq:prob1}
  \begin{array}{cl}
    \min_{x} & f(x) \\ \sthat & g_i(x) \leq 0 \quad i = 1\ldots p \\ &
    h_i(x) = 0\quad i = 1\ldots m,
  \end{array}
\end{equation}
where $f:\setR^n\to\setR$ and $g_i:\setR^n\to\setR^{p_i}$ are convex
functions, and $h_i:\setR^n\to\setR^{m_i}$ are affine functions. In
total, we have $p_1+p_2 + \ldots +p_p$ inequality constraints that are
grouped together into $p$ batches and $m_1+m_2+ \ldots + m_m$ equality
constraints that are subdivided into $m$ groups. For distributing the
constraints we can assume without loss of generality that $m=p$. That
is, we have $m$ batches that each contain $p_i$ inequality and $m_i$
equality constraints.

Again it is easier to work with an equivalent reformulation of
Problem~\ref{eq:prob1}, where each batch of equality and inequality
constraints shares the same variables $x_i$, namely problems of the
form
\begin{equation} \label{eq:prob2}
  \begin{array}{cll}
    \min_{x_i, z} & \sum_{i=1}^m f(x_i) \\
    \sthat & \max\{0,\, g_i(x_i)\}^2 = 0 & i = 1\ldots m \\
    & h_i(x_i) = 0 & i = 1\ldots m\\
    & x_i = z,
  \end{array}
\end{equation}
where all the variables $x_i$ are coupled through the affine
constraints $x_i=z$. To keep our exposition simple, the objective
function has been scaled by $m$ in the reformulation.

For specializing our extension of ADMM to instances of
Problem~\ref{eq:prob2} we need the Augmented Lagrangian of this
problem, which reads as
\begin{align*}
  L_\rho(x_i, z, \mu_{i,g}, \mu_{i,h}, \lambda) = 
  &\sum_{i=1}^m f(x_i) + \frac{\rho}{2}\sum_{i=1}^m \|\max\{0,\,  g_i(x_i)\}^2\|^2 \\
  &+ \sum_ i^m (\mu_{i, g})^\top \max\{0,\, g_i(x_i)\}^2 \\
  &+ \frac{\rho}{2}\sum_{i=1}^m\|h_i(x_i)\|^2 +
  \sum_ i^m (\mu_{i, h})^\top h_i(x_i) \\ 
  &+ \frac{\rho}{2}\sum_{i=1}^m \|x_i - z\|^2 + \sum_i^m (\lambda_i)^\top(x_i-z),
\end{align*}
where $\mu_{i, g}, \mu_{i, h}$, and $\lambda_i$ are the Lagrange
multipliers (dual variables).

Note that the Lagrange function is separable. Hence, the update of the
$x$ variables in Line~5 of Algorithm~\ref{algo:1} decomposes into the
following $m$ independent updates  
\begin{align*}
  x_i^{k+1} = \argmin_{x_i}\:\,
  & f(x_i) + \frac{\rho}{2}\|\max\{0,\, g_i(x_i)\}^2\|^2 \\
  & +(\mu_{i,g}^k)^\top \max\{0,\, g_i(x_i)\}^2 \\
  &+ \frac{\rho}{2}\|h_i(x_i)\|^2 + (\mu_{i, h}^k)^\top h_i(x_i) \\
  &+ \frac{\rho}{2}\|x_i-z^k\|^2 + (\lambda_i^k)^\top(x_i - z^k),
\end{align*}
that can be solved in parallel once the constraints $g_i(x_i)$ and
$h_i(x_i)$ have been distributed on $m$ different, distributed compute
nodes. Note that each update is an unconstrained, convex optimization
problem, because the functions that need to be minimized are sums of
convex functions. The only two summands where this might not be
obvious, are
\[
\frac{\rho}{2}\|\max\{0,\, g_i(x_i)\}^2\|^2
\]
and
\[
(\mu_{i,g}^k)^\top \max\{0,\, g_i(x_i)\}^2.
\]
For the first term note that the squared norm of a non-negative,
convex function is always convex again. The second term is convex,
because according to Lemma~\ref{lem:0} the $\mu_{i,g}^k$ are always
non-negative.

The update of the $z$ variable in Line~6 of Algorithm~\ref{algo:1}
amounts to solving the following unconstrained optimization problems
\begin{align*}
  z^{k+1} &= \argmin_z \sum_{i=1}^m \frac{\rho}{2}\|x_i^{k+1} -
  z\|^2 + \sum_{i=1}^m (\lambda_i^k)^\top(x_i^{k+1}-z) \\
  &=\frac{\rho\sum_{i=1}^m x_i^{k+1} + \sum_{i=1}^m \lambda_i^k}{\rho
    \cdot m},
\end{align*}
and the updates of the dual variables $\mu_i$ and $\lambda_i$ are as
follows
\begin{align*}
\mu_{i, g}^{k+1}  =\:\,&  \mu_{i, g}^k + \rho\, \max\{0,\, g_i(x_i^{k+1})\}^2, \\
\mu_{i, h}^{k+1}  =\:\,&  \mu_i^k + \rho\, h_i(x_i^{k+1}), \\
\lambda_i^{k+1}  =\:\,&  \lambda_i^k + \rho\left(x_i^{k+1} - z^{k+1}\right).
\end{align*}

That is, in each iteration there are $m$ independent, unconstrained
minimization problems that can be solved in parallel on different
compute nodes. The solutions of the independent subproblems are then
combined on a central node through the update of the $z$ variables and
the Lagrange multipliers. Actually, since the Lagrange multipliers
$\mu_{i, g}$ and $\mu_{i, h}$ are also local, i.e., involve only the
variables $x_i^{k+1}$ for any given index $i$, they can also be
updated in parallel on the same compute nodes where the $x_i^k$
updates take place. Only the variables $z$ and the Lagrange
multipliers $\lambda_i$ need to be updated centrally.

Looking at the update rules it becomes apparent that
Algorithm~\ref{algo:1} when applied to instances of
Problem~\ref{eq:prob2} is basically a combination of the standard
Augmented Lagrangian method~\cite{Hestenes69,Powell69} for solving
convex, constrained optimization problems and ADMM. It combines the
ability to solve constrained optimization problems (Augmented
Lagrangian) with the ability to solve convex optimization problems
distributedly (ADMM).

Let us briefly come back to the comparison with the alternative
approach of dealing with convex constraints $g_0(x)\leq 0$ by adding
appropriate indicator functions to the objective function and using
standard ADMM. As we have discussed before, every compute node has to
ensure feasibility and thus needs to project onto the feasible set
$\{x\,|\,g_0(x)\leq 0\}$ in every iteration. These projections are
quadratic, non-linearly constrained optimization problems. In contrast
to that, our extension of ADMM only needs to solve unconstrained
optimization problems in every iteration.

\newpage
\section{Experiments}
\label{sec:experiments}

We have implemented our extension of ADMM in Python using the NumPy
and SciPy libraries, and tested this implementation on the robust SVM
problem~\cite{ShivaswamyBS06} that has a second order cone constraint
for every data point. In our experiments we distributed these
constraints onto different compute nodes, where we had to solve an
unconstrained optimization problem in every iteration.

Since there is no other approach available that could deal with a
large number of arbitrary constraints in a distributed manner we
compare our approach to the baseline approach of running an Augmented
Lagrangian method in an outer loop and standard ADMM in an inner
loop. Note that this approach has three nested loops. The outer loop
turns the constrained problem into a sequence of unconstrained
problems (Augmented Lagrangians), the next loop distributes the
problem using distributed ADMM, and the final inner loop solves the
unconstrained subproblems using the L-BFGS-B
algorithm~\cite{MoralesN11,ZhuBLN97} in our implementation.

\subsection{Robust SVMs}

The robust SVM problem has been designed to deal with binary
classification problems whose input are not just labeled data points
$(x^{(1)},y^{(1)}),\ldots,(x^{(n)},y^{(n)})$, where the $x^{(i)}$ are
feature vectors and the $y^{(i)}$ are binary labels, but a
distribution over the feature vectors. That is, the labels are assumed
to be known precisely and the uncertainty is only in the features. The
idea behind the robust SVM is replacing the constraints (for feature
vectors without uncertainty)
of the standard linear soft-margin SVM by their probabilistic
counterparts
\[
\textsf{Pr} \big[ y^{(i)} w^\top x^{(i)} \geq 1 -\xi_i \big]
\geq 1 - \delta_i
\]
that require the now random variable $x^{(i)}$ with probability at
least $1-\delta_i \geq 0$ to be on the correct side of the hyperplane
whose normal vector is $w$.
Shivaswamy et al.\ show that the probabilistic constraints can be
written as second order cone constraints
\[
 y^{(i)} w^\top \bar x^{(i)} \geq 1 -\xi_i + \sqrt{\delta_i / (1-\delta_i)}
 \,\big\| \Sigma_i^{1/2} w\big\|,
\]
under the assumption that the mean of the random variable $x^{(i)}$ is
the empirical mean $\bar x^{(i)}$ and the covariance matrix of
$x^{(i)}$ is $\Sigma_i$.  The robust SVM problem is then the following
SOCP (second order cone program)
\begin{align*}
  &\min_{w,\xi}\: \frac{1}{2} \|w\|^2 + c \sum_{i=1}^n \xi_i \\
  &\sthat\:\, y^{(i)} w^\top \bar x^{(i)} \geq 1 -\xi_i +
  \sqrt{\delta_i / (1-\delta_i)} \big\| \Sigma_i^{1/2} w\big\| \\ 
  &\qquad\, \xi_i \geq 0, \quad\: i = 1\ldots n.
\end{align*}
This problem can be reformulated into the form of
Problem~\ref{eq:prob2} and is thus amenable to a distributed
implementation of our extension of ADMM.

\subsection{Experimental setup}

We generated random data sets similarly to~\cite{AndersonDLV12}, where
an interior point solver has been described for solving the robust SVM
problem. The set of feature vectors was sampled from a uniform
distribution on $[-1, 1]^n$. The covariance matrices $\Sigma_i$ were
randomly chosen from the cone of positive semidefinite matrices with
entries in the interval $[-1, 1]$ and $\delta_i$ has been set to
$\frac{1}{2}$. Each data point contributes exactly one constraint to
the problem and is assigned to only one of the compute nodes.

In the following, the primal optimization variables are $w$ and $\xi$,
the consensus variables for the primal optimization variables $w$ are
still denoted as $z$, and also the dual variables are still denoted as
$\lambda$ for the consensus constraints and $\mu$ for the convex
constraints, respectively.

\subsection{Convergence results}

\begin{figure*}[t!]
	\begin{center}
    \includegraphics[width=0.32\textwidth]{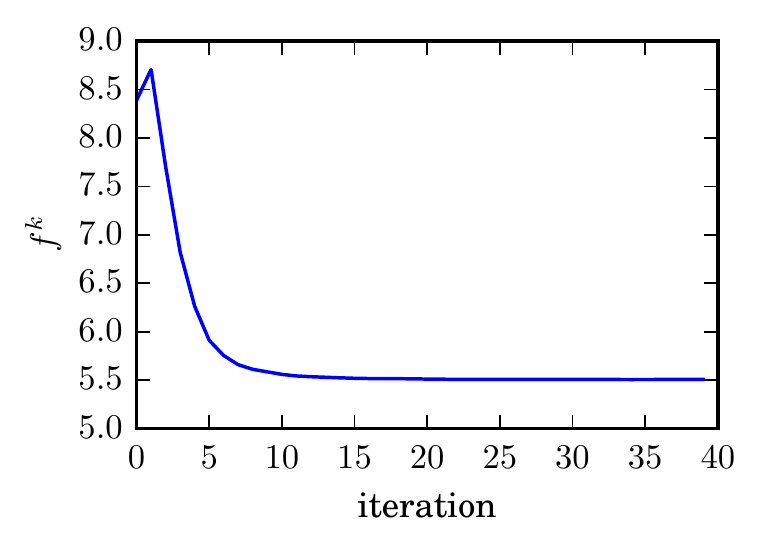}
    \includegraphics[width=0.32\textwidth]{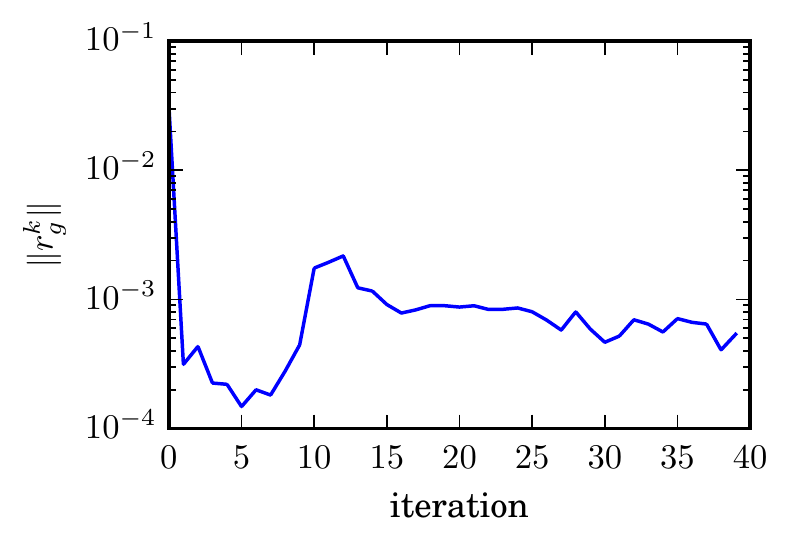}
    \includegraphics[width=0.32\textwidth]{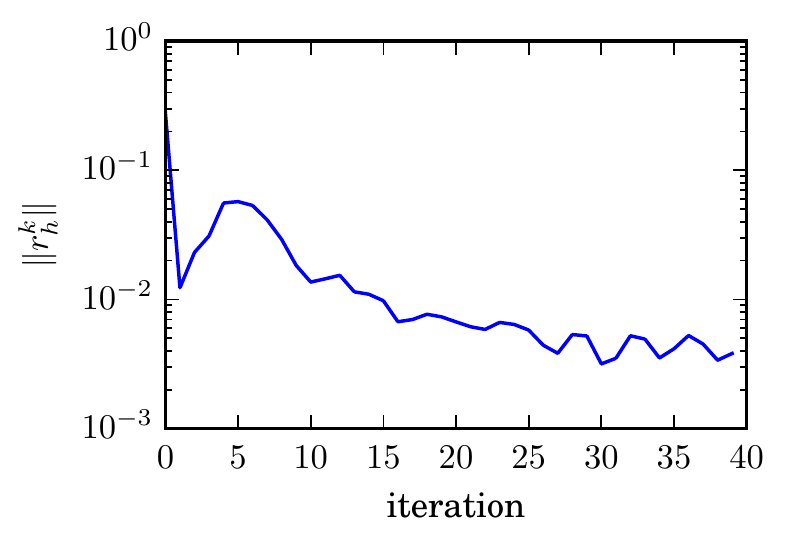}

    \includegraphics[width=0.32\textwidth]{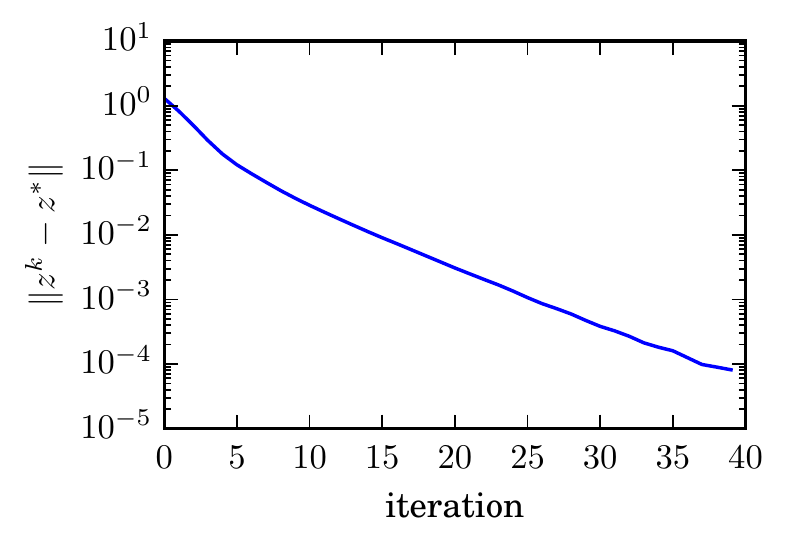}
    \includegraphics[width=0.32\textwidth]{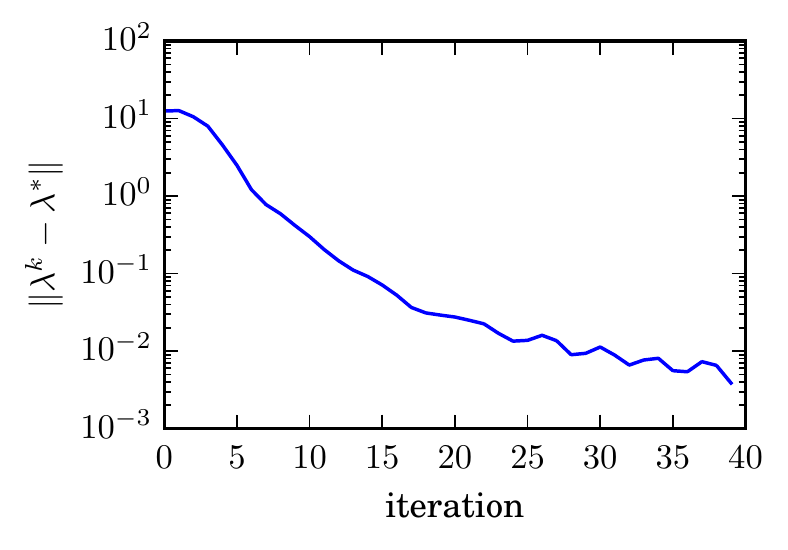}
    \includegraphics[width=0.32\textwidth]{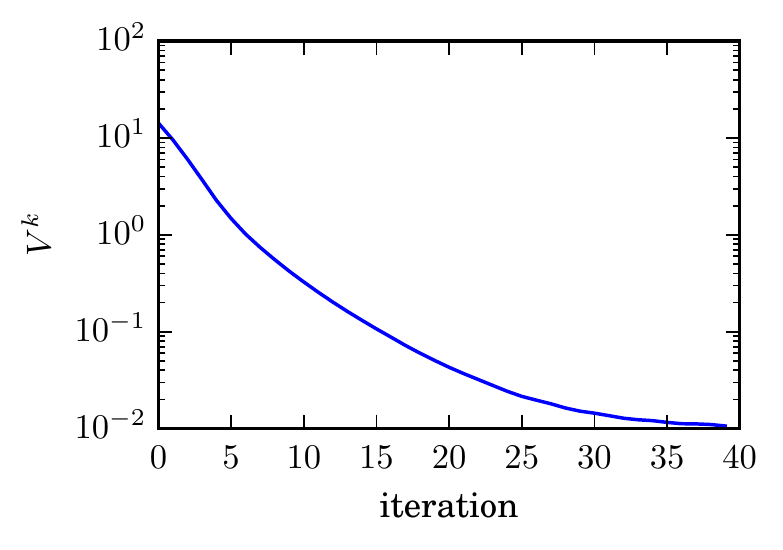}
	\end{center}
\caption{Various statistics for the performance of the distributed
  ADMM extension on an instance of the robust SVM problem. The
  convergence proof only states that the value $V^k$ must be
  monotonically decreasing. This can be observed also experimentally
  in the figure on the bottom right. Neither the primal function value
  nor the residuals need to be monotonically decreasing, and as can be
  seen in the figures on the top, they actually do not decrease
  monotonically.}
\label{fig:1}
\end{figure*}
Figure~\ref{fig:1} shows the primal objective function value $f^k$, the
norm of the residuals $r_g^k$ and $r_h^k$, the distances $\|z^k - z^*\|$,
$\|\lambda^k - \lambda^*\|$, and the value $V^k$ of one run of our
algorithm for two compute nodes. Note, that only $V^k$ must be
strictly monotonically decreasing according to our convergence
analysis. The proof does not make any statement about the monotonicity
of the other values, and as can be seen in Figure~\ref{fig:1}, such
statements would actually not be true. All values decrease in the long
run, but are not necessarily monotonically decreasing.

As can be seen in Figure~\ref{fig:1} (top-left), the function value
$f^k$ is actually increasing for the first few iterations, while the
residuals $r_g^k$ for the inequality constraints become very small,
see Figure~\ref{fig:1} (top-middle). That is, within the first
iterations each compute node finds a solution to its share of the data
that is almost feasible but has a higher function value than the true
optimal solution. This basically means that the errors $\xi_i$ for the
data points are over-estimated. After a few more iterations the primal
function value drops and the inequality residuals increase meaning
that the error terms $\xi_i$ as well as the individual estimators
$w_i$ converge to their optimal values.

In the long run, the local estimators at the different compute nodes
converge to the same solution. This is witnessed in Figure~\ref{fig:1}
(top-right), where one can see that the residuals $r_h^k$ for the
consensus constraints converge to zero, i.e., consensus among the
compute nodes is reached in the long run.

Finally, it can be seen that the consensus estimator $z^k$ converges
to its unique optimal point $z^*$. Note that in general we cannot
guarantee such a convergence since the optimal point does not need to
be unique. But of course, in the special case that the optimal point
is unique we always have convergence to this point.

\subsection{Scalability results}

\begin{figure*}[t!]
  \begin{center}
    \includegraphics[width=0.32\textwidth]{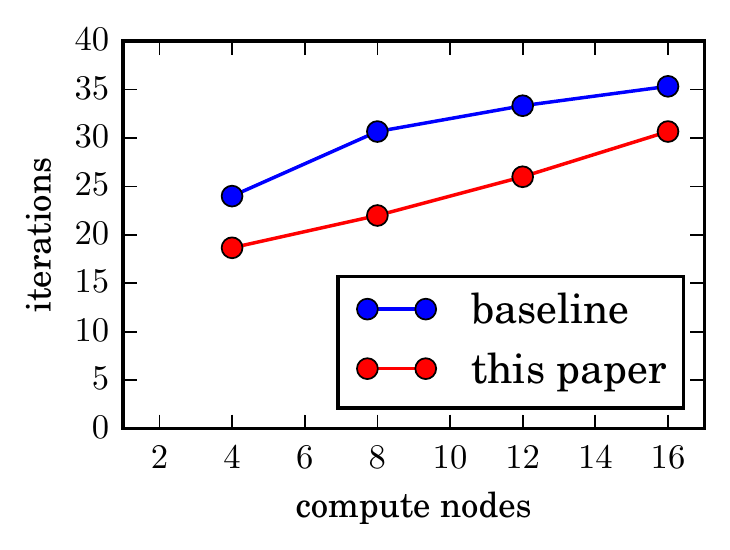} 
    \includegraphics[width=0.33\textwidth]{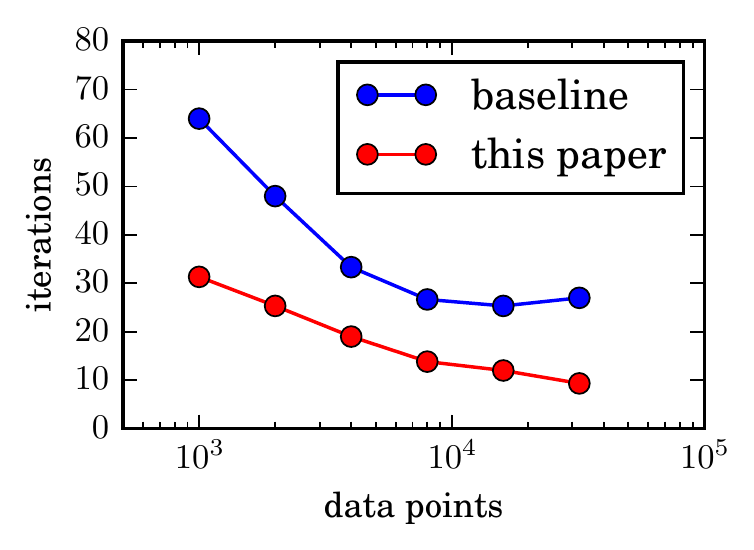}
    \includegraphics[width=0.32\textwidth]{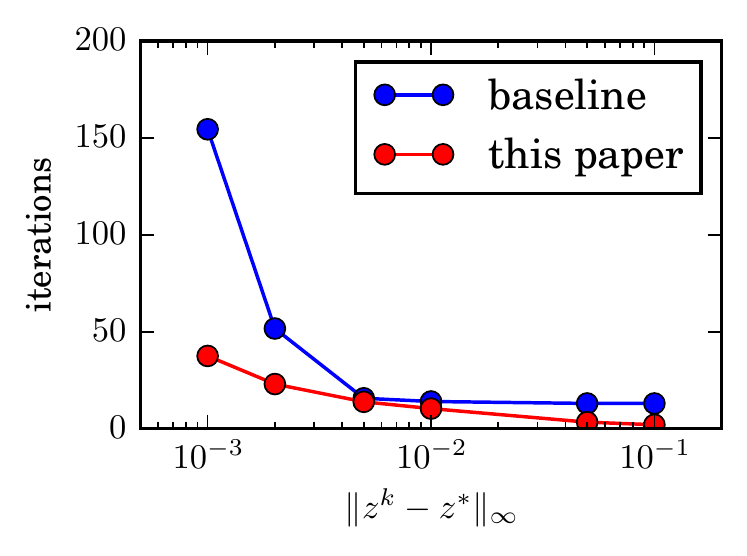}
  \end{center}
  \caption{Running times of the algorithm on the robust SVM
    problem. The figure on the left shows that the number of
    iterations increases mildly with the number of compute nodes. The
    middle picture shows that the number of iterations is decreasing
    with increasing number of data points. The figure on the right
    shows the dependency of the distance of the consensus estimator
    $z^k$ in iteration $k$ to the optimal estimator $z^*$. It can be
    seen that our extension of ADMM outperforms the baseline approach
    with three nested loops.}
\label{fig:2}
\end{figure*}
Figure~\ref{fig:2} shows the scalability of our extension of ADMM in
terms of the number of compute nodes, data points, and approximation
quality, respectively. All running times were measured in terms of
iterations and averaged over runs for ten randomly generated data
sets. The figures show the number of iterations for the approach
presented in this paper and for the baseline, i.e., a three nested
loops approach.

(1)\quad For measuring the scalability in terms of employed compute nodes,
we generated 10,000 data points with 10,000 features. As stopping
criterion we used
\[
\|z^k-z^*\|_\infty\leq 5\cdot 10^{-3},
\]
i.e., the predictor $z^k$ had to be close to the optimum. Here we use
the infinity norm to be independent from the number of dimensions. The
data set was split into four, eight, twelve, and 16 equal sized
batches that were distributed among the compute nodes. Note that every
batch had much fewer data points than features, and thus the optimal
solutions to the respective problems at the compute nodes were quite
different from each other. Nevertheless, our algorithm converged very
well to the globally optimal solution. Only the convergence speed was
affected by the diversity of the local solutions at the different
compute nodes. Since we kept the total number of data points in our
experiments fixed, the diversity was increasing with the number of
compute nodes that were assigned fewer data points each. Hence it was
expected that the convergence speed decreases, i.e., the number of
iterations increases, with a growing number of compute nodes. The
expected behavior can be seen in Figure~\ref{fig:2} (left). However,
the increase is rather mild. The number of iterations less than
doubles when the number of compute nodes increases from four to 16.

(2)\quad For measuring the scalability in terms of the number of data
points we increased the number of data points but kept the number of
features fixed at 200. The stopping criterion for our algorithm was
again
\[
\|z^k-z^*\|_\infty\leq 5\cdot 10^{-3}.
\]
We used eight compute nodes to compute the solutions. Again, the
points were distributed equally among the compute nodes. This time one
would expect a decreasing running time with an increasing number of
data points, because the number of data points per machine is
increasing and thus also the diversity of the local solutions at the
different compute nodes is decreasing. That is, with an increasing
number of data points it should take fewer iterations to reach an
approximate consensus about the global solution among the compute
nodes. The results of the experiment that are shown in
Figure~\ref{fig:2} (middle) confirm this expectation. The number of
iterations indeed decreases with a growing number of data points. It
has been noted before by Shalev-Shwartz and
Srebro~\cite{ShalevShwartzS08} that an increasing number of data
points can require less work for providing a good predictor. We
observe a similar phenomenon here.

(3)\quad For measuring the scalability in terms of the approximation
quality, we generated 8000 data points in 200 dimensions. Again, eight
compute nodes were used for the experiments whose results are shown in
Figure~\ref{fig:2} (right). As expected the number of iterations
(running time) increases with increasing approximation quality that
was again measured in terms of the infinity norm. In this paper we are
not providing a theoretical convergence rate analysis, which we leave
for future work, but the experimental results shown here already
provide some intuition on the dependency of the number of iterations
in terms of the approximation quality: It seems that our extension of
ADMM can solve problems to a medium accuracy within a reasonable
number of iterations, but higher accuracy requires a significant
increase in the number of iterations. Such a behavior is well known
for standard ADMM without constraints~\cite{boydADMM}. In the context
of our example application, robust SVMs, medium accuracy usually is
sufficient as often higher accuracy solutions do not provide better
predictors, a phenomenon that is also known as regularization by early
stopping.

\section{Conclusions}
\label{sec:conclusion}

We have introduced and analyzed an algorithm for solving general
convex optimization problems with many non-linear constraints in a
distributed setting. The algorithm is based on an extension of the
alternating direction method of multipliers (ADMM). Experiments on the
robust SVM problem corroborate our theoretical convergence analysis
and demonstrate the scalability of the approach in terms of the number
of compute nodes as well as the number of data points.

Despite the vast literature on ADMM, to the best of our knowledge, an
ADMM-like scheme for distributing general convex constraints has not
been studied before. Standard ADMM is typically used for solving
unconstrained optimization problems with separable objective function
in a distributed fashion, but in principle standard ADMM can also be
used for solving constrained optimization problems. This leads, in the
distributed implementation of ADMM, to local, constrained optimization
problems that have to be solved in every iteration. These local
constrained optimization problems are easy to solve in special cases,
for instance for linear constraints, but can become hard to solve in
the general case of convex, non-linear constraints. In general three
nested loops are necessary in this approach, an outer loop for
reaching consensus, one loop for the constraints, and an inner loop
for solving unconstrained problems. Alternatively, one can use the
Augmented Lagrangian method for constrained optimization in the outer
loop and standard ADMM in the inner loop. This approach also entails
three nested loops, an outer loop for the constraints, one loop for
reaching consensus, and an inner loop for solving unconstrained
problems. That is, the tasks of the two outer loops, reaching
consensus and dealing with the constraints, are interchanged in the
two approaches. Here, we use the second approach, i.e., the Augmented
Lagrangian with ADMM in the inner loop, as our baseline since it
avoids the need for solving constrained problems in the inner
loop. But our main contribution is showing that the two loops for
reaching consensus and for handling constraints can be merged. This
results in an extension of ADMM for dealing with problems with many,
non-linear constraints in a distributed fashion that only needs two
nested loops. To the best of our knowledge, we provide the first
convergence proof for such a lazy algorithmic scheme. Experimental
results provide evidence that our two-loop algorithm is indeed more
efficient than the baseline approach with three nested loops.

\subsection*{Acknowledgments} This work was supported by Deutsche
Forschungsgemeinschaft (DFG) under grant GI-711/5-1 and grant LA2971/1-1.

\bibliographystyle{plain}

\end{document}